\newtheorem{theorem}{Theorem}[section]
\newtheorem{lemma}[theorem]{Lemma}
\newtheorem{proposition}[theorem]{Proposition}
\theoremstyle{definition}
\newtheorem{example}[theorem]{Example}
\newtheorem{question}[theorem]{Question}
\theoremstyle{remark}
\newtheorem{remark}[theorem]{Remark}
\numberwithin{equation}{section}
\newcommand{\hyper}{\mathcal{H}}
\newcommand{\finite}{\mathcal{F}}
\begin{document}
\color{darkgray}

\title[A note on the structural stability of almost one-to-one maps]{A note on the structural stability of almost one-to-one maps}

\author[]{Mar\'{\i}a Isabel Cortez}
\address{Facultad de Matem\'aticas, Pontificia Universidad Cat\'olica de Chile. Edificio Rolando Chuaqui, Campus San Joaquín. Avda. Vicuña Mackenna 4860, Macul, Chile.}
\email{maria.cortez@uc.cl}

\author[]{Till Hauser}
\address{Facultad de Matem\'aticas, Pontificia Universidad Cat\'olica de Chile. Edificio Rolando Chuaqui, Campus San Joaquín. Avda. Vicuña Mackenna 4860, Macul, Chile.}
\email{hauser.math@mail.de}

\thanks{This article was funded by the Deutsche Forschungsgemeinschaft (DFG, German Research Foundation) – 530703788.}

\begin{abstract}
     A continuous surjection  $\pi:X\to Y$   between  compact Hausdorff spaces induces continuous surjections $\mathcal{M}(\pi)\colon \mathcal{M}(X)\to\mathcal{M}(Y)$ and $\hyper(\pi): \hyper(X)\to\hyper(Y)$ between the spaces of regular Borel probability measures, and the spaces of closed subsets, respetively. 
     It is well known that $\hyper(\pi)$ is irreducible if and only if $\pi$ is irreducible.
     We show that $\mathcal{M}(\pi)$ is irreducible if and only if $\pi$ is irreducible. 
     Furthermore, we show that whenever $\pi$ is almost one-to-one then $\mathcal{M}(\pi)$ and $\hyper(\pi)$ are almost one-to-one.  In particular, we observe that continuous surjections between compact metric spaces are almost one-to-one if and only if $\hyper(\pi)$ is almost one-to-one and a similar statement about $\mathcal{M}(\pi)$. Finally, we give alternative proofs for some results in \cite{DX24} regarding semi-open maps. 
\newline
\newline
\noindent \textit{Keywords.} Irreducible map, almost one-to-one map, hyperspace, probability measure, induced map, semi-open map, push-forward map. 
\newline
\noindent \textit{2020 Mathematics Subject Classification.} Primary 54C10; Secondary 54B20, 37B05, 54B10. 
\end{abstract} \maketitle

\section{Introduction}

Almost one-to-one maps (which we will define below) are a natural ingredient to the study and characterization of topological dynamical systems  \cite{Au88, FW89, Dow05, W12}. Related properties are openess, semi-openess and irreducibility, which all naturally appear in the study of topological dynamics \cite{GW77, Au88, Gl07} and which we will discuss in more detail below. 
Since several interesting examples in topological dynamics arise from dynamical systems with non-metric phase spaces \cite{ellis1969lectures, Au88} we consider it to be natural examine these notions as purely topological notions for continuous surjections between compact Hausdorff spaces. 

A continuous and surjective mapping $\pi\colon X\to Y$ between compact Hausdorff spaces is called \emph{open}, whenever images of open sets are open. It is called \emph{semi-open}, whenever images of non-empty open sets have a non-empty interior. It is shown in \cite[Lemma 2.1]{Gl07} and in \cite[Lemma 2]{DX24} that $\pi$ is semi-open, if and only if the preimage of any dense set is dense. 

A point $x\in X$ is called an \emph{injectivity point}, if and only if $\pi^{-1}(\pi(x))=\{x\}$, i.e.\ if $x$ is the only point mapping to $\pi(x)$. 
A continuous and surjective map $\pi\colon X\to Y$ is called \emph{almost one-to-one}, if and only if the set $X_0$ of injectivity points is dense in $X$ \cite[Page 157]{Au88}. 
A continuous and surjective mapping $\pi\colon X\to Y$ between compact Hausdorff spaces is called \emph{irreducible}, whenever for all closed set $A\subseteq X$ with $\pi(A)=Y$ we have that $A=X$. Note that this property is also called highly proximality in the context of factor maps between minimal actions on compact Hausdorff spaces \cite{GW77}.
Furthermore, $\pi$ is irreducible, if and only if for every nonempty open $U\subseteq X$ there exists $y\in Y$ with $\pi^{-1}(y)\subseteq U$, and if and only if for every nonempty open $U\subseteq X$ there exists a nonempty and open subset $V\subseteq Y$ with $\pi^{-1}(V)\subseteq U$ \cite{AG01}. 
It follows that any almost one-to-one map is irreducible and that any irreducible map is semi-open. If $X$ is metric, then the notions of irreducibility and almost one-to-one maps coincide \cite[Lemma 1.1]{AG01}, but there exist irreducible maps between compact Hausdorff spaces that are not almost one-to-one \cite[Example 5.29]{ellis1969lectures}. 

Note that some authors prefer to define almost one-to-one maps by asking for $\pi(X_0)$ to be dense in $Y$, where $X_0$ is the set of injectivity points of $\pi$. 
Considering $\pi\colon [-1,1]\to [0,1]$ which maps $[-1,0]$ to $0$ and acts as the identity on $[0,1]$ we note that this property is strictly weaker than our definition. 
Combining the previous comments, we observe that $\pi$ is almost one-to-one, if and only if it is semi-open and $\pi(X_0)$ is dense in $Y$. Since any factor map between minimal actions is semi-open, we see that the two versions of the definition agree in the minimal context. 

Another important object in the study of topological dynamics is the hyperpace of a compact Hausdorff space $X$ \cite{BS75, GW95, Gla00, KO07, Nag22}.
Denote $\hyper(X)$ for the \emph{hyperspace of $X$}, i.e.\ the set of all closed and non-empty subsets $A\subseteq X$. 
It is equipped with the \emph{Vietoris topology}, for which a base is given by the sets of the form 
\[\langle U_1,\dots,U_n\rangle:=\left\{A\in \hyper(X);\, A\subseteq \bigcup_{i=1}^n U_i, \text{ and } A\cap U_i\neq \emptyset\text{ for all } i=1,\dots,n\right\}\]
with $n\geq 1$ and open and non-empty subsets $U_1,\dots,U_n$ of $X$. 
Recall from \cite[Proposition 4.1, Theorem 4.9]{E51} that $\hyper(X)$ is a compact Hausdorff space and that it is metrizable, if and only if $X$ is metrizable.

Equally important as the hyperspace is the set of all regular Borel probability measures on $X$ equipped with the weak*-topology, which we will denote by $\mathcal{M}(X)$ \cite{BS75, GW95}.  
It is well-known that from the Banach-Alaoglu theorem it follows that $\mathcal{M}(X)$ is a compact Hausdorff space and it is straightforward to show that it is metrizable, if and only if $X$ is metrizable \cite{DGS76}. 

Now consider a continuous surjection $\pi\colon X\to Y$. 
We denote $\hyper(\pi)$ for the mapping $\hyper(X)\ni A\mapsto \pi(A)\in \hyper(Y)$. Note that $\hyper(\pi)$ is a continuous surjection between the hyperspaces. 
Furthermore, the push forward mapping $\mathcal{M}(X)\ni \mu\mapsto (f\mapsto \mu(f\circ \pi))\in \mathcal{M}(Y)$ will be denoted by $\mathcal{M}(\pi)$. 
It is a continuous and affine surjection. 
Note that $\hyper$ and $\mathcal{M}$ are functorial, i.e.\ behave well under composition. 
It is natural to study the stability of properties of continuous surjections under $\hyper$ and $\mathcal{M}$. We next collect some results from the literature.  

\pagebreak

\begin{theorem}
Let $\pi$ be a cont.\ surjection between compact Hausdorff spaces. 
\begin{itemize}
    \item[(${\mathcal{M}}_o$)] $\mathcal{M}(\pi)$ is open, if and only if $\pi$ is open \cite[§4]{DE72}. 
    
    \item[(${\hyper}_o$)] $\hyper(\pi)$  is open, if and only if $\pi$ is open \cite[Thm.\ 4.3]{Hos97}.
    
    \item[(${\mathcal{M}}^{\Rightarrow}_s$)] If $\mathcal{M}(\pi)$ is semi-open, then $\pi$ is semi-open \cite[Thm.\ B'']{DX24}.  
    
    \item[(${\mathcal{M}}^{m}_s$)] If $\pi$ is a continuous surjection between compact metrizable spaces, then $\mathcal{M}(\pi)$ is semi-open, if and only if $\pi$ is semi-open \cite[Thm.\ 2.3]{Gl07} and \cite[Thm.\ B'']{DX24}. 
    
    \item[($\hyper_s$)]  $\hyper(\pi)$ is semi-open, if and only if $\pi$ is semi-open \cite[Thm.\ 4]{DX24}. 

    \item[($\hyper_i$)] $\hyper(\pi)$ is irreducible, if and only if $\pi$ is irreducible 
    \cite[Thm.\ 9B]{DX24}.
\end{itemize}
In particular, we observe the following from ($\hyper_i$): 
\begin{itemize}
    \item[($\hyper^{m}_{a}$)] If $\pi$ is a continuous surjection between compact metrizable spaces, then $\hyper(\pi)$ is almost one-to-one, if and only if $\pi$ is almost one-to-one. 
\end{itemize}
\end{theorem}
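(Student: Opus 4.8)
The plan is to reduce the statement to the already-established equivalence $(\hyper_i)$ by exploiting the fact that, in the metric category, irreducibility and the almost one-to-one property coincide. Concretely, I would first record that since $X$ and $Y$ are compact metrizable, the remark in the excerpt (citing \cite{E51}) guarantees that $\hyper(X)$ and $\hyper(Y)$ are compact metrizable as well. This places all four spaces involved, namely $X$, $Y$, $\hyper(X)$ and $\hyper(Y)$, inside the metric setting, which is precisely where \cite[Lemma 1.1]{AG01} applies: a continuous surjection between compact metric spaces is almost one-to-one if and only if it is irreducible.

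The argument then becomes a short chain of equivalences. Applying the metric equivalence to $\pi\colon X\to Y$ shows that $\pi$ is almost one-to-one if and only if $\pi$ is irreducible. Applying the same equivalence to $\hyper(\pi)\colon \hyper(X)\to \hyper(Y)$, which is legitimate because both hyperspaces are compact metric and $\hyper(\pi)$ is a continuous surjection between them, shows that $\hyper(\pi)$ is almost one-to-one if and only if $\hyper(\pi)$ is irreducible. Finally, $(\hyper_i)$ connects the two intermediate conditions by asserting that $\hyper(\pi)$ is irreducible if and only if $\pi$ is irreducible. Concatenating these three biconditionals yields that $\hyper(\pi)$ is almost one-to-one if and only if $\pi$ is almost one-to-one, which is exactly $(\hyper^m_a)$.

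I do not expect a substantial obstacle here: the only points requiring care are the bookkeeping checks that the hypotheses of each invoked result are met, in particular that $\hyper(\pi)$ is itself a continuous surjection between compact metric spaces so that the metric coincidence of irreducibility and the almost one-to-one property may be invoked for it, and that metrizability of $X$ and $Y$ transfers to $\hyper(X)$ and $\hyper(Y)$. Both facts are already recorded in the excerpt, so the statement follows formally from $(\hyper_i)$ and \cite[Lemma 1.1]{AG01} without any new topological construction.
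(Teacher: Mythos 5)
Your proposal is correct and matches the paper's intended deduction: the paper simply notes that $(\hyper^m_a)$ "follows from $(\hyper_i)$", and the implicit argument is exactly your chain --- metrizability of $X$ and $Y$ passes to $\hyper(X)$ and $\hyper(Y)$ by \cite{E51}, so \cite[Lemma 1.1]{AG01} identifies almost one-to-one with irreducible both for $\pi$ and for $\hyper(\pi)$, and $(\hyper_i)$ links the two. The remaining items of the theorem are literature citations requiring no proof, so nothing further is needed.
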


In this article we will add to this list of results by showing the following in Section \ref{sec:proofMainThm}. 

\begin{theorem}
    \label{thm:mainResults}
    Let $\pi$ be a cont.\ surjection between compact Hausdorff spaces. 
\begin{itemize}
    \item[($\mathcal{M}_i$)] $\mathcal{M}(\pi)$ is irreducible, if and only if $\pi$ is irreducible.
    \item[($\mathcal{M}^{\Leftarrow}_a$)] If $\pi$ is almost one-to-one, then $\mathcal{M}(\pi)$ is almost one-to-one.
     \item[($\hyper^{\Leftarrow}_a$)] If $\pi$ is almost one-to-one, then $\hyper(\pi)$ is almost one-to-one. 
\end{itemize}
In particular, we observe the following from ($\mathcal{M}_i$): 
\begin{itemize}
    \item[($\mathcal{M}^{m}_a$)] If $\pi$ is a continuous surjection between compact metrizable spaces, then $\mathcal{M}(\pi)$ is almost one-to-one, if and only if $\pi$ is almost one-to-one.
\end{itemize}
\end{theorem}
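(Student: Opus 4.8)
The plan is to obtain $(\mathcal{M}^{m}_a)$ as a purely formal consequence of $(\mathcal{M}_i)$, combined with the equivalence recorded in the introduction from \cite[Lemma 1.1]{AG01}: for a continuous surjection whose domain is compact metrizable, irreducibility and the almost one-to-one property coincide. This mirrors exactly how $(\hyper^m_a)$ is deduced from $(\hyper_i)$ in the first theorem. The essential preliminary observation is that metrizability is preserved under the $\mathcal{M}$ construction: if $X$ and $Y$ are compact metrizable, then so are $\mathcal{M}(X)$ and $\mathcal{M}(Y)$, as recalled above from \cite{DGS76}. Consequently both $\pi\colon X\to Y$ and the induced map $\mathcal{M}(\pi)\colon\mathcal{M}(X)\to\mathcal{M}(Y)$ are continuous surjections with compact metrizable domains, so that the AG01 equivalence applies to each of them.

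With this in hand I would chain three biconditionals. First, since $X$ is compact metrizable, $\pi$ is almost one-to-one if and only if $\pi$ is irreducible. Second, by $(\mathcal{M}_i)$, the map $\pi$ is irreducible if and only if $\mathcal{M}(\pi)$ is irreducible. Third, since $\mathcal{M}(X)$ is compact metrizable, $\mathcal{M}(\pi)$ is irreducible if and only if $\mathcal{M}(\pi)$ is almost one-to-one. Composing these three equivalences yields that $\pi$ is almost one-to-one if and only if $\mathcal{M}(\pi)$ is almost one-to-one, which is precisely $(\mathcal{M}^{m}_a)$; both implications of the stated biconditional fall out simultaneously from the chain.

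Since the deduction only recombines facts that are already available, there is no genuine analytic obstacle at this stage — all the substance resides in the proof of $(\mathcal{M}_i)$. The one point demanding care is to verify that the hypotheses of the AG01 equivalence are genuinely met in \emph{both} of its applications, namely that the relevant domain ($X$ in the first application, $\mathcal{M}(X)$ in the third) is compact metrizable. This is exactly why the metrizability transfer, \emph{$X$ metrizable implies $\mathcal{M}(X)$ metrizable}, must be invoked explicitly before the equivalence can be used on the induced map $\mathcal{M}(\pi)$.
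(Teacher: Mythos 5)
There is a genuine gap: your proposal proves only the last of the four assertions. The statement to be established consists of three substantive items --- ($\mathcal{M}_i$), ($\mathcal{M}^{\Leftarrow}_a$), ($\hyper^{\Leftarrow}_a$) --- together with the corollary ($\mathcal{M}^{m}_a$), and you treat ($\mathcal{M}_i$) as a known fact rather than proving it, while ($\mathcal{M}^{\Leftarrow}_a$) and ($\hyper^{\Leftarrow}_a$) are never addressed at all. You even say so yourself (``all the substance resides in the proof of ($\mathcal{M}_i$)''), which is accurate: that substance is precisely what is missing. In the paper, the forward direction of ($\mathcal{M}_i$) is proved by taking a closed $A\subseteq X$ with $\pi(A)=Y$, forming the closed convex hull $M$ of $\delta(A)$, using that $\mathcal{M}(\pi)$ is affine, continuous and closed to see $\mathcal{M}(\pi)(M)=\mathcal{M}(Y)$, invoking irreducibility of $\mathcal{M}(\pi)$ to get $M=\mathcal{M}(X)$, and then applying Milman's theorem to conclude that the extreme points $\delta(X)$ lie in the closed set $\delta(A)$, whence $A=X$. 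The converse direction is a weak*-approximation argument: any closed $M\subseteq\mathcal{M}(X)$ with $\mathcal{M}(\pi)(M)=\mathcal{M}(Y)$ is shown to contain every finite convex combination $\sum_i\alpha_i\delta_{x_i}$ by choosing, via irreducibility of $\pi$, points $y_i$ with $\pi^{-1}(y_i)$ inside prescribed neighborhoods of the $x_i$ and lifting $\sum_i\alpha_i\delta_{y_i}$ into $M$. The two almost one-to-one statements also require arguments (short ones: convex combinations of Dirac measures at injectivity points, respectively finite subsets of injectivity points, are dense sets of injectivity points for $\mathcal{M}(\pi)$ and $\hyper(\pi)$). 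None of this can be recovered from the formal chaining you describe.

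For the part you do prove, your deduction is correct and is exactly the paper's intended route: since $X$ metrizable implies $\mathcal{M}(X)$ metrizable, the equivalence of irreducibility and the almost one-to-one property from \cite[Lemma 1.1]{AG01} applies both to $\pi$ and to $\mathcal{M}(\pi)$, and composing these two equivalences with ($\mathcal{M}_i$) yields ($\mathcal{M}^{m}_a$); this mirrors how ($\hyper^{m}_a$) follows from ($\hyper_i$). Your attention to verifying metrizability of $\mathcal{M}(X)$ before applying the AG01 equivalence to the induced map is the right point of care. But as a proof of the theorem as stated, the proposal is incomplete: it establishes a corollary of an unproven assertion.
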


In addition, Section \ref{sec:alternativeProof} presents an argument that allows for alternative and more direct proofs of ($\mathcal{M}^{\Rightarrow}_s$) and the implication $\Rightarrow$ in ($\hyper_s$).
The following natural questions remain open. Note that they are all answered affirmatively if $X$ is a compact metric space. 
See \cite{DX24} for a partial answer on the first question. 

\begin{question}
Let $\pi$ be a cont.\ surjection between compact Hausdorff spaces.  
\begin{itemize}
\item[($\mathcal{M}^{\Leftarrow}_s$)] If $\pi$ is semi-open, 
is $\mathcal{M}(\pi)$ semi-open?  
\item[($\mathcal{M}^{\Rightarrow}_a$)] If $\mathcal{M}(\pi)$ is almost one-to-one, 
is $\pi$ almost one-to-one?
\item[($\hyper^{\Rightarrow}_a$)] If $\hyper(\pi)$ is almost one-to-one, is $\pi$ is almost one-to-one?
\end{itemize}
\end{question}

In Section \ref{sec:productsDecompositionComposition}, we present further stability properties of the discussed notions, regarding product, composition and decomposition of maps.

\section{Irreducible and almost one-to-one maps}
\label{sec:proofMainThm}
In this section we prove various propositions, which combined yield the statement of Theorem \ref{thm:mainResults}. 
Consider a compact Hausdorff space $X$. 
We denote $\delta^{(X)}\colon X\to \mathcal{M}(X)$ for the mapping that maps $x$ to the Dirac measure $\delta_x$.  With a slight abuse of notation, we simply write $\delta$, whenever the space is clear from the context. 

\begin{proposition}
\label{pro:IM}
    Let $\pi\colon X\to Y$ be a continuous surjection between compact Hausdorff spaces. 
\begin{itemize}
    \item[($\mathcal{M}_i$)] $\mathcal{M}(\pi)$ is irreducible, if and only if $\pi$ is irreducible.
\end{itemize}
\end{proposition}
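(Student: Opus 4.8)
The plan is to prove the two implications separately, using the characterization of irreducibility in terms of open sets recalled in the introduction, together with the Dirac embedding $\delta=\delta^{(X)}$ and the fact that $\mathcal{M}$ sends continuous surjections to continuous surjections. The easier implication is ``$\mathcal{M}(\pi)$ irreducible $\Rightarrow$ $\pi$ irreducible'', which I would handle with the closed-set definition; the converse is the real content and requires a construction.

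For the easy direction, let $A\subseteq X$ be closed with $\pi(A)=Y$, and consider the set $\mathcal{M}(A)$ of probability measures concentrated on $A$, viewed as a subset of $\mathcal{M}(X)$. This set is closed: for closed $A$ the evaluation $\mu\mapsto\mu(A)$ is upper semi-continuous, so $\{\mu:\mu(A)=1\}$ is closed. The restriction of $\mathcal{M}(\pi)$ to $\mathcal{M}(A)$ coincides with $\mathcal{M}(\pi|_A)$, and since $\pi|_A\colon A\to Y$ is a continuous surjection this restriction is onto $\mathcal{M}(Y)$. Irreducibility of $\mathcal{M}(\pi)$ then forces $\mathcal{M}(A)=\mathcal{M}(X)$, and testing against the Dirac measures $\delta_x$ for $x\in X$ yields $x\in A$, hence $A=X$.

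For the converse I would use the characterization that $\mathcal{M}(\pi)$ is irreducible if and only if every nonempty open $\mathcal{U}\subseteq\mathcal{M}(X)$ contains a whole fibre $\mathcal{M}(\pi)^{-1}(\nu)$ for some $\nu\in\mathcal{M}(Y)$. First I shrink the problem: since the finitely supported measures are weak*-dense, $\mathcal{U}$ contains some $\mu_0=\sum_{k=1}^N a_k\delta_{x_k}$ with distinct $x_k$ and $a_k>0$, and I fix a basic neighbourhood $\{\mu:|\mu(f_i)-\mu_0(f_i)|<\varepsilon \text{ for } i=1,\dots,m\}\subseteq\mathcal{U}$. Choosing pairwise disjoint open neighbourhoods $U_k\ni x_k$ on which each $f_i$ varies by less than $\varepsilon/2$, a short estimate shows that any measure $\mu$ with $\mu(U_k)=a_k$ for all $k$ (which forces $\mu$ to be concentrated on $\bigcup_k U_k$) already lies in $\mathcal{U}$. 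Now irreducibility of $\pi$ enters: for each $k$ I pick a point $y_k\in Y$ with $\pi^{-1}(y_k)\subseteq U_k$, and disjointness of the $U_k$ makes the $y_k$ distinct. Setting $\nu=\sum_k a_k\delta_{y_k}$, any $\mu$ with $\mathcal{M}(\pi)(\mu)=\nu$ is concentrated on $\bigcup_k\pi^{-1}(y_k)\subseteq\bigcup_k U_k$ with $\mu(\pi^{-1}(y_k))=a_k$, whence $\mu(U_k)=a_k$ by disjointness and so $\mu\in\mathcal{U}$. This gives $\mathcal{M}(\pi)^{-1}(\nu)\subseteq\mathcal{U}$, as required.

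The main obstacle I anticipate is precisely this reduction step in the converse: arbitrary weak*-open sets are cut out by arbitrary test functions $f_i\in C(X)$, which need not factor through $\pi$, so one cannot simply pull an open set of $\mathcal{M}(Y)$ back into $\mathcal{U}$. The device that circumvents this is to replace $\mathcal{U}$ by the combinatorial condition ``$\mu(U_k)=a_k$ for all $k$'' on finitely many disjoint neighbourhoods, which is exactly the type of condition that a point-mass target $\nu=\sum_k a_k\delta_{y_k}$ can enforce through the small fibres supplied by irreducibility of $\pi$.
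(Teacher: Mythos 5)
Your proof is correct, but it takes a genuinely different route from the paper's, most notably in the direction ``$\mathcal{M}(\pi)$ irreducible $\Rightarrow$ $\pi$ irreducible''. There the paper forms the closed convex hull $M$ of $\delta(A)$, shows $\mathcal{M}(\pi)(M)=\mathcal{M}(Y)$, concludes $M=\mathcal{M}(X)$ from irreducibility, and then invokes Milman's theorem to recover $\delta(X)\subseteq \delta(A)$ from the fact that the Dirac measures are the extreme points of $\mathcal{M}(X)$. You instead work with the closed set $\mathcal{M}(A)=\{\mu\in\mathcal{M}(X):\,\mu(A)=1\}$ (closedness via upper semi-continuity of $\mu\mapsto\mu(A)$, which for regular measures on compact Hausdorff spaces follows from outer regularity and Urysohn), obtain surjectivity onto $\mathcal{M}(Y)$ from functoriality --- $\mathcal{M}(\pi|_A)$ is onto because $\pi|_A$ is a continuous surjection, a fact the paper already records --- and finish by testing against Dirac measures. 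This avoids extreme-point theory entirely, at the price of leaning on the measure-lifting fact; since $\mathcal{M}(A)$ is precisely the closed convex hull of $\delta(A)$, the two proofs handle the same object with different tools. In the converse direction your argument is essentially the paper's in dual form: the paper uses the closed-set definition (a closed $M$ with $\mathcal{M}(\pi)(M)=\mathcal{M}(Y)$ contains every finitely supported measure, by a weak*-approximation), while you use the fibre characterization of irreducibility recalled in the introduction, applied to $\mathcal{M}(\pi)$ --- legitimate, since $\mathcal{M}(\pi)$ is itself a continuous surjection between compact Hausdorff spaces. The core construction is the same in both: a finitely supported centre $\sum_k a_k\delta_{x_k}$, neighborhoods $U_k$ on which the test functions oscillate little, points $y_k$ with $\pi^{-1}(y_k)\subseteq U_k$ supplied by irreducibility of $\pi$, and the target measure $\sum_k a_k\delta_{y_k}$. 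The bookkeeping differs: you need the $U_k$ pairwise disjoint (hence the $y_k$ distinct) so that $\mathcal{M}(\pi)(\mu)=\nu$ pins down $\mu(U_k)=a_k$, whereas the paper needs no disjointness because it decomposes a lifted measure as $\nu=\sum_i\alpha_i\nu_i$ with $\nu_i$ supported on $\pi^{-1}(y_i)$. Both versions are complete; yours trades Milman's theorem for the lifting theorem in one direction, and the closed-set for the open-set (fibre) formulation in the other.
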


We will need to use Milman's theorem \cite[Proposition 1.5]{Phe01}. 

\begin{lemma}[Milman]
\label{lem:milman}
    Suppose that $M$ is a compact convex subset of a locally convex space, that $A \subseteq M$, and that $M$ is the closed convex hull of $A$. Then the extreme points of $M$ are contained in the closure of $A$.
\end{lemma}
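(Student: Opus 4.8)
The plan is to argue by contradiction, following the classical covering argument. Suppose $x_0$ is an extreme point of $M$ but $x_0 \notin \overline{A}$. First I would replace $A$ by $\overline{A}$: since $A \subseteq \overline A \subseteq M$ and $M$ is closed and convex, we have $M = \overline{\operatorname{conv}}(A) = \overline{\operatorname{conv}}(\overline A)$, so there is no loss in assuming that $A$ is already closed, hence compact as a closed subset of the compact set $M$. Because the ambient space is locally convex and Hausdorff (hence regular as a topological vector space), for each $a \in A$ — which satisfies $a \neq x_0$ — I can choose a closed convex neighborhood $C_a$ of $a$ with $x_0 \notin C_a$ (take a convex open $V\ni a$ with $x_0\notin\overline V$ and set $C_a=\overline V$). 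The interiors $\operatorname{int} C_a$ form an open cover of the compact set $A$, so finitely many suffice: $A \subseteq C_{a_1} \cup \dots \cup C_{a_n}$.

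Next I would set $K_i := \overline{\operatorname{conv}}(A \cap C_{a_i})$ for $i = 1, \dots, n$. Each $K_i$ is a closed convex subset of the compact set $M$, hence compact, and since $C_{a_i}$ is closed and convex we have $K_i \subseteq C_{a_i}$, so in particular $x_0 \notin K_i$. The crucial step is the claim that $M = \operatorname{conv}(K_1 \cup \dots \cup K_n)$, with the convex hull already closed. The inclusion $\supseteq$ is clear. For $\subseteq$, note that $A = \bigcup_i (A \cap C_{a_i}) \subseteq \bigcup_i K_i$, whence $M = \overline{\operatorname{conv}}(A) \subseteq \overline{\operatorname{conv}}(\bigcup_i K_i)$; it then remains to see that $\operatorname{conv}(\bigcup_i K_i)$ is already compact, so that taking the closure is redundant. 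This I would obtain by exhibiting it as the image of the compact set $\Delta_{n-1} \times K_1 \times \dots \times K_n$ under the continuous map $(\lambda_1, \dots, \lambda_n, k_1, \dots, k_n) \mapsto \sum_{i=1}^n \lambda_i k_i$, where $\Delta_{n-1}$ denotes the standard simplex of convex coefficients; its image is exactly $\operatorname{conv}(\bigcup_i K_i)$ and, being a continuous image of a compact set into a Hausdorff space, is compact.

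Finally I would exploit extremality. By the previous step, $x_0 = \sum_{i=1}^n \lambda_i k_i$ with $k_i \in K_i$, $\lambda_i \geq 0$ and $\sum_i \lambda_i = 1$. A standard induction on the two-point definition of an extreme point shows that if an extreme point is written as a convex combination of points of $M$ with strictly positive weights, then each of those points must equal the extreme point itself; hence $x_0 = k_j$ for some $j$ with $\lambda_j > 0$. But then $x_0 = k_j \in K_j \subseteq C_{a_j}$, contradicting $x_0 \notin C_{a_j}$. This contradiction shows $x_0 \in \overline A$, proving $\operatorname{ext}(M) \subseteq \overline A$. I expect the main obstacle to be the key step that $\operatorname{conv}(\bigcup_i K_i)$ is compact, and therefore closed: the simplex parametrization must be set up so that it genuinely surjects onto the convex hull, and the whole argument must be phrased for an arbitrary locally convex space rather than a normed one — which is precisely why extracting a \emph{finite} subcover from the compactness of $A$ is indispensable.
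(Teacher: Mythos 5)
Your proof is correct: it is exactly the classical covering argument for Milman's theorem (finitely many closed convex neighborhoods avoiding $x_0$, the sets $K_i=\overline{\operatorname{conv}}(A\cap C_{a_i})$, compactness of $\operatorname{conv}(K_1\cup\dots\cup K_n)$ via the simplex parametrization, and extremality), which is precisely the proof of \cite[Proposition~1.5]{Phe01} that the paper cites without reproducing. The only cosmetic point is to discard the indices with $A\cap C_{a_i}=\emptyset$ (so that each $K_i$ is nonempty and the parametrizing map surjects onto the hull), which does not affect the argument.
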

\begin{proof}[Proof of Proposition \ref{pro:IM}:]

    '$\Rightarrow$': Let $A\subseteq X$ be closed, such that $\pi(A)=Y$. 
    Denote $M$ for the closed convex hull of $\delta(A)$. 
    We observe $\mathcal{M}(\pi)(\delta(A))=\delta(\pi(A))=\delta(Y)$. 
    Since $\mathcal{M}(\pi)$ is affine, continuous and closed we have that $\mathcal{M}(\pi)(M)$ is the closed convex hull of $\mathcal{M}(\pi)(\delta(A))=\delta(Y)$ and hence $\mathcal{M}(\pi)(M)=\mathcal{M}(Y)$. 
    Now recall that $\mathcal{M}(\pi)$ is assumed to be irreducible, which implies $M=\mathcal{M}(X)$. 

    We have shown that $\mathcal{M}(X)$ is the closed convex hull of $\delta(A)$. Since $\delta$ is a ho\-meo\-mor\-phism onto it's image we observe that $\delta(A)$ is closed and hence Milman's theorem (see Lemma \ref{lem:milman}) yields that the extreme points of $\mathcal{M}(X)$ are contained in $\delta(A)$, i.e.\ $\delta(X)\subseteq \delta(A)$. This implies $X=A$.

    '$\Leftarrow$': Consider a closed subset $M\subseteq \mathcal{M}(X)$ with $\mathcal{M}(\pi)(M)=\mathcal{M}(Y)$. 
    In order to show that $M=\mathcal{M}(X)$ we will show that $M$ contains all finite convex combinations of elements of $\delta(X)$. Since the set of all such finite convex combinations is dense in $\mathcal{M}(X)$ the statement then follows. 
    
    Consider such a finite convex combination $\mu=\sum_{i=1}^n \alpha_i \delta_{x_i}$ of elements of $\delta(X)$. 
    Since $M$ is closed and $\mathcal{M}(X)$ is equipped with the weak*-topology for $\epsilon>0$ and a finite subset $F\subseteq C(X)$ we need to find $\nu\in M$ with $|\mu(f)-\nu(f)|\leq \epsilon$ for all $f\in F$ in order to observe that $\mu\in M$. 
    
    Let $\epsilon>0$ and $F\subseteq C(X)$ finite. 
    Let $U_i:=\bigcap_{f\in F}\{x\in X;\, |f(x)-f(x_i)|<\epsilon\}$ and note that $U_i$ is an open neighborhood of $x_i$. 
    Since $\pi$ is irreducible there exists $y_i\in Y$ with $\pi^{-1}(y_i)\subseteq U_i$. 
    From $\sum_{i=1}^n \alpha_i \delta_{y_i}\in \mathcal{M}(Y)$ we observe that there exists $\nu\in M$ with $\mathcal{M}(\pi)(\nu)=\sum_{i=1}^n \alpha_i \delta_{y_i}$. Clearly, we can pick $\nu_i\in \mathcal{M}(\pi^{-1}(y_i))$, such that $\nu=\sum_{i=1}^n \alpha_i \nu_i$.
    Note that we have $\nu_i(U_i)=1$, which allows to compute the following for $f\in F$ and all $i$
\begin{align*}
    \left|\delta_{x_i}(f)-\nu_i(f)\right|
    = \left|\int_{U_i} f(x_i)-f(x) d\nu_i(x)\right|
    \leq \int_{U_i} \left|f(x_i)-f(x)\right| d\nu_i(x)
    \leq \epsilon.
\end{align*}
This shows
$\left|\mu(f)-\nu(f)\right|
\leq \sum_{i=1}^n \alpha_i \left|\delta_{x_i}(f)-\nu_i(f)\right|
\leq \epsilon$ for all $f\in F$.
\end{proof}

\begin{proposition}
\label{pro:AM}
    Let $\pi\colon X\to Y$ be a continuous surjection between compact Hausdorff spaces. 
\begin{itemize}
    \item[($\mathcal{M}^\Leftarrow_a$)] If $\pi$ is almost one-to-one, then $\mathcal{M}(\pi)$ is almost one-to-one.
\end{itemize}
\end{proposition}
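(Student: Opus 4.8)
The plan is to exhibit a weak*-dense set of injectivity points of $\mathcal{M}(\pi)$. The natural candidates are the finitely supported measures all of whose atoms lie in the set $X_0$ of injectivity points of $\pi$; I would show first that every such measure is an injectivity point of $\mathcal{M}(\pi)$, and then that these measures are dense in $\mathcal{M}(X)$.

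For the injectivity step, fix $\mu=\sum_{i=1}^n\alpha_i\delta_{x_i}$ with distinct $x_1,\dots,x_n\in X_0$, weights $\alpha_i>0$ and $\sum_i\alpha_i=1$. Setting $y_i:=\pi(x_i)$, I first note that the $y_i$ are pairwise distinct: if $\pi(x_i)=\pi(x_j)$ then $x_j\in\pi^{-1}(\pi(x_i))=\{x_i\}$ since $x_i\in X_0$, forcing $i=j$. Hence $\mathcal{M}(\pi)(\mu)=\sum_i\alpha_i\delta_{y_i}$ is supported on the finite set $\{y_1,\dots,y_n\}$. Now suppose $\nu\in\mathcal{M}(X)$ satisfies $\mathcal{M}(\pi)(\nu)=\sum_i\alpha_i\delta_{y_i}$; I claim $\nu=\mu$. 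Using that $\mathcal{M}(\pi)(\nu)(B)=\nu(\pi^{-1}(B))$ for every Borel $B$, the measure $\mathcal{M}(\pi)(\nu)$ assigns full mass to $\{y_1,\dots,y_n\}$, so $\nu$ is carried by $\pi^{-1}(\{y_1,\dots,y_n\})=\bigcup_i\pi^{-1}(y_i)=\{x_1,\dots,x_n\}$, where the last equality uses that each $x_i$ is an injectivity point. Moreover $\nu(\{x_i\})=\nu(\pi^{-1}(y_i))=\mathcal{M}(\pi)(\nu)(\{y_i\})=\alpha_i$, whence $\nu=\sum_i\alpha_i\delta_{x_i}=\mu$. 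Thus $\mathcal{M}(\pi)^{-1}(\mathcal{M}(\pi)(\mu))=\{\mu\}$.

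For the density step I would approximate an arbitrary $\mu\in\mathcal{M}(X)$ in the weak*-topology. Since finite convex combinations of Dirac measures are weak*-dense in $\mathcal{M}(X)$, it suffices to approximate such a combination $\sum_i\alpha_i\delta_{x_i}$ by one supported on $X_0$. Given $\epsilon>0$ and a finite $F\subseteq C(X)$, put $U_i:=\bigcap_{f\in F}\{x\in X:\,|f(x)-f(x_i)|<\epsilon\}$, an open neighborhood of $x_i$, and use density of $X_0$ to choose $x_i'\in U_i\cap X_0$. Then $\nu:=\sum_i\alpha_i\delta_{x_i'}$ is finitely supported on $X_0$ and satisfies $|\mu(f)-\nu(f)|\leq\sum_i\alpha_i|f(x_i)-f(x_i')|\leq\epsilon$ for every $f\in F$, exactly as in the proof of Proposition \ref{pro:IM}. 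Hence every basic weak*-neighborhood contains an injectivity point of $\mathcal{M}(\pi)$, the injectivity points are dense, and $\mathcal{M}(\pi)$ is almost one-to-one.

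The density half is routine, mirroring the second part of the proof of Proposition \ref{pro:IM} together with the standard weak*-density of finitely supported measures. The conceptual heart is the injectivity step: pushing a finitely supported measure on $X_0$ forward yields a measure supported on the \emph{distinct} images $y_i$, and the singleton fibers $\pi^{-1}(y_i)=\{x_i\}$ then rigidly pin down any preimage measure. The points needing genuine care are precisely where the almost-one-to-one hypothesis enters: that distinct injectivity points have distinct images, and that the support-and-mass bookkeeping $\nu(\{x_i\})=\alpha_i$ is legitimate. I expect no serious obstacle beyond these, since the argument never requires characterizing \emph{all} injectivity points of $\mathcal{M}(\pi)$, only producing a dense supply of them.
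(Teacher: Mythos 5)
Your proof is correct and follows essentially the same route as the paper: both show that the finitely supported measures with atoms in $X_0$ (i.e.\ the convex hull of $\delta(X_0)$) are dense in $\mathcal{M}(X)$ and are injectivity points of $\mathcal{M}(\pi)$. You merely spell out the two steps the paper leaves terse --- the weak*-approximation argument for density and the mass bookkeeping $\nu(\{x_i\})=\alpha_i$ behind the paper's ``a moment's thought reveals that $\mu=\nu$'' --- which is a welcome level of detail but not a different argument.
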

\begin{proof}
    Denote $X_0$ for the set of injectivity points in $X$. 
    Since $X_0$ is dense in $X$ we observe that $\delta(X_0)$ is dense in $\delta(X)$ and hence that the convex hull $c\delta(X_0)$ of $\delta(X_0)$ is dense in $\mathcal{M}(X)$. 
    It thus suffices to show that $c\delta(X_0)$ consists of injectivity points of $\mathcal{M}(\pi)$. 
    For this consider $\mu\in c\delta(X_0)$ and $\nu\in \mathcal{M}(X)$ with $\mathcal{M}(\pi)\mu=\mathcal{M}(\pi)\nu$. 
    Since $\mu$ has a finite support in $X_0$ we observe that $\mathcal{M}(\pi)\mu$ has a finite support in $Y_0:=\pi(X_0)$. 
    Thus, since $X_0$ consists of injectivity points, also $\nu$ must have a support in $X_0$ and a moments thought reveals that $\mu=\nu$.    
\end{proof}

\begin{proposition}
\label{pro:AH}
    Let $\pi\colon X\to Y$ be a continuous surjection between compact Hausdorff spaces. 
\begin{itemize}
    \item[($\hyper^\Leftarrow_a$)] If $\pi$ is almost one-to-one, then $\hyper(\pi)$ is almost one-to-one.
\end{itemize}
\end{proposition}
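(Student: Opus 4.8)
The plan is to exhibit a dense subset of $\hyper(X)$ consisting of injectivity points of $\hyper(\pi)$. Write $X_0$ for the dense set of injectivity points of $\pi$. The key claim is that every closed set $A\in\hyper(X)$ with $A\subseteq X_0$ is already an injectivity point of $\hyper(\pi)$, and that such sets---indeed, the finite ones---are dense in the Vietoris topology. Granting this, density of the set of injectivity points follows immediately and we conclude that $\hyper(\pi)$ is almost one-to-one.

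First I would prove the claim. Let $A\subseteq X_0$ be closed and suppose $B\in\hyper(X)$ satisfies $\hyper(\pi)(B)=\pi(B)=\pi(A)=\hyper(\pi)(A)$; the goal is $B=A$. For the inclusion $B\subseteq A$, take $b\in B$; then $\pi(b)\in\pi(A)$, so $\pi(b)=\pi(a)$ for some $a\in A\subseteq X_0$, and since $\pi^{-1}(\pi(a))=\{a\}$ we get $b=a\in A$. For the reverse inclusion $A\subseteq B$, take $a\in A$; then $\pi(a)\in\pi(B)$, so some $b\in B$ has $\pi(b)=\pi(a)$, and again $\pi^{-1}(\pi(a))=\{a\}$ forces $b=a$, whence $a\in B$. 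Thus $B=A$, so $\hyper(\pi)^{-1}(\{\pi(A)\})=\{A\}$, i.e.\ $A$ is an injectivity point of $\hyper(\pi)$.

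Next I would establish density. Recall that a base for the Vietoris topology is given by the sets $\langle U_1,\dots,U_n\rangle$ with $U_1,\dots,U_n$ nonempty and open. Since $X_0$ is dense in $X$, for each $i$ I can choose a point $x_i\in U_i\cap X_0$, and then $A:=\{x_1,\dots,x_n\}$ is a finite (hence closed, as $X$ is Hausdorff) subset of $X_0$ satisfying $A\subseteq\bigcup_{i=1}^n U_i$ and $A\cap U_i\neq\emptyset$ for all $i$, so that $A\in\langle U_1,\dots,U_n\rangle$. By the claim, $A$ is an injectivity point of $\hyper(\pi)$. Hence every basic open set of $\hyper(X)$ meets the set of injectivity points of $\hyper(\pi)$, so this set is dense and $\hyper(\pi)$ is almost one-to-one.

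The argument is essentially elementary, and I expect no genuine obstacle. The one step deserving care is the claim that closed subsets of $X_0$ are injectivity points: both inclusions $B\subseteq A$ and $A\subseteq B$ must be checked, and although each rests on the singleton-preimage property of injectivity points, the two directions use it in slightly different ways (once to place a point of $B$ inside $A$, once to place a point of $A$ inside $B$), so it is worth spelling both out rather than appealing to symmetry.
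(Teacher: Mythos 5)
Your proof is correct and follows essentially the same route as the paper's: both exhibit the finite subsets of the injectivity set $X_0$ as a dense family of injectivity points of $\hyper(\pi)$, differing only in that you verify the two inclusions $A\subseteq B$ and $B\subseteq A$ directly (and for arbitrary closed subsets of $X_0$), while the paper first shows the competing preimage lies in $X_0$ and then invokes injectivity of $\pi$ on $X_0$. Your explicit verification of density via basic Vietoris open sets is a detail the paper merely asserts, but the substance of the argument is the same.
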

\begin{proof}
     Let $X_0$ be the set of injectivity points of $\pi$ in $X$ and denote $\finite(X_0)$ for the set of finite subsets of $X_0$. 
    Since $X_0$ is dense in $X$ we observe $\finite(X_0)$ to be dense in $\hyper(X)$. It thus suffices to show that $\finite(X_0)$ consists of injectivity points w.r.t.\ $\hyper(\pi)$. 

    For this consider $F\in \finite(X_0)$ and $A\in \hyper(X)$ with $ \hyper(\pi)(F)=\hyper(\pi)(A)$, i.e.\ $\pi(F)=\pi(A)$. 
    For $x\in A$ we observe $\pi(x)\in \pi(A)=\pi(F)\subseteq \pi(X_0)$, from which we see that $x$ is an injectivity point of $\pi$. This shows $A\subseteq X_0$. 
    Since the restriction of $\pi$ to a map $X_0\to \pi(X_0)$ is bijective, we deduce $F=A$ from $\pi(F)=\pi(A)$. This shows $F$ to be an injectivity point of $\hyper(\pi)$. 
\end{proof}

\section{Semi-open maps}
\label{sec:alternativeProof}

    We will next present an alternative and less technical proof for \cite[Theorem B'']{DX24} ($\mathcal{M}^{\Rightarrow}_s$) and one of the implications in \cite[Thm.\ 4]{DX24} ($\Rightarrow$ in $\hyper_s$) using that a continuous surjection is semi-open, if and only if all preimages of dense sets are dense.

    \begin{proposition}
        Let $\pi$ be a cont.\ surjection between compact Hausdorff spaces. 
    \begin{itemize}
        \item[($\mathcal{M}^{\Rightarrow}_s$)] If $\mathcal{M}(\pi)$ is semi-open, then $\pi$ is semi-open. 
        
        \item[($\mathcal{H}^{\Rightarrow}_s$)] If $\hyper(\pi)$ is semi-open, then $\pi$ is semi-open.
    \end{itemize}
    \end{proposition}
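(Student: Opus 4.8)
The plan is to exploit the stated characterization of semi-openness—that a continuous surjection is semi-open if and only if the preimage of every dense set is dense—and to transfer density between the base spaces and the induced spaces through the embeddings $\delta$ and $Y\ni y\mapsto\{y\}$. Both implications follow the same template, so I would carry out ($\mathcal{M}^{\Rightarrow}_s$) in detail and indicate the parallel for ($\hyper^{\Rightarrow}_s$). Throughout, fix a dense set $E\subseteq Y$; the goal is to show $\overline{\pi^{-1}(E)}=X$, which by the characterization is exactly semi-openness of $\pi$.

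First I would promote $E$ to a dense subset of $\mathcal{M}(Y)$. Since $\delta$ is a homeomorphism onto its image, $\delta(E)$ is dense in $\delta(Y)$, and passing to convex hulls (using continuity of finite convex combinations) shows that $c\delta(E)$ is dense in $c\delta(Y)$, the finitely supported probability measures, which in turn are weak*-dense in $\mathcal{M}(Y)$. Hence $c\delta(E)$ is dense in $\mathcal{M}(Y)$. Applying the hypothesis that $\mathcal{M}(\pi)$ is semi-open to this dense set yields that $\mathcal{M}(\pi)^{-1}(c\delta(E))$ is dense in $\mathcal{M}(X)$.

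The crux is then a support argument. For $\mu\in\mathcal{M}(\pi)^{-1}(c\delta(E))$ the pushforward $\mathcal{M}(\pi)(\mu)$ is a finite convex combination of Diracs $\delta_{y_i}$ with $y_i\in E$, so $\mu$ is concentrated on the finite union of fibers $\pi^{-1}(y_i)\subseteq\pi^{-1}(E)$; thus $\mu\in\mathcal{M}(\overline{\pi^{-1}(E)})$. As $\mathcal{M}(\overline{\pi^{-1}(E)})$ embeds as a weak*-closed (indeed compact) subset of $\mathcal{M}(X)$ and contains the dense set $\mathcal{M}(\pi)^{-1}(c\delta(E))$, it must equal all of $\mathcal{M}(X)$. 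In particular every Dirac $\delta_x$ lies in $\mathcal{M}(\overline{\pi^{-1}(E)})$, forcing $x\in\overline{\pi^{-1}(E)}$ for all $x\in X$, i.e.\ $\pi^{-1}(E)$ is dense.

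For ($\hyper^{\Rightarrow}_s$) I would run the identical argument with $c\delta(E)$ replaced by $\finite(E)$, which is dense in $\hyper(Y)$ exactly as in the proof of Proposition \ref{pro:AH}, and with $\mathcal{M}(\overline{\pi^{-1}(E)})$ replaced by the Vietoris-closed set $\{A\in\hyper(X):A\subseteq\overline{\pi^{-1}(E)}\}$: any $A$ with $\hyper(\pi)(A)=\pi(A)$ a finite subset of $E$ satisfies $A\subseteq\pi^{-1}(E)$, and the same closedness-plus-density conclusion gives $\overline{\pi^{-1}(E)}=X$. I expect the main obstacle to be precisely this support/closedness step—checking that the preimages land inside the correct closed subset and that this subset is genuinely weak*- (resp.\ Vietoris-) closed; once that is secured, the density transfer does the rest.
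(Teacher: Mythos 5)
Your proposal is correct and follows essentially the same route as the paper's own proof: density of $c\delta(E)$ (resp.\ $\finite(E)$), the semi-openness characterization via preimages of dense sets, and the support argument showing the preimage sits inside the compact (hence closed) set $\mathcal{M}(\overline{\pi^{-1}(E)})$ (resp.\ $\hyper(\overline{\pi^{-1}(E)})$, which is exactly your $\{A\in\hyper(X):A\subseteq\overline{\pi^{-1}(E)}\}$), forcing $\overline{\pi^{-1}(E)}=X$. The closedness step you flag as the potential obstacle is indeed the crux, and your verification of it matches the paper's.
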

    \begin{proof}
    '($\mathcal{M}^{\Rightarrow}_s$)': Let $D\subseteq Y$ be dense and note that the convex hull $c\delta(D)$ of $\delta(D)$ is dense in $\mathcal{M}(Y)$. 
    From the semi-openness of $\mathcal{M}(\pi)$ we thus observe that 
    $\mathcal{M}(\pi)^{-1}(c\delta(D))$ is dense in $\mathcal{M}(X)$. 
    Any $\mu\in \mathcal{M}(\pi)^{-1}(c\delta(D))$ is supported on $X':=\overline{\pi^{-1}(D)}$, from which we observe $\mathcal{M}(\pi)^{-1}(c\delta(D))\subseteq \mathcal{M}(X')$. 
    Since the latter is compact and hence a closed subset of $\mathcal{M}(X)$ we observe $\mathcal{M}(X')=\mathcal{M}(X)$, i.e.\ $X=X'$. 

    '($\mathcal{H}^{\Rightarrow}_s$)': Let $D\subseteq Y$ be dense and consider the set $\finite(D)$ of all finite subsets of $D$. 
    Since $\finite(D)$ is dense in  $\hyper(Y)$ we observe from the semi-openess of $\pi$ that $\hyper(\pi)^{-1}(\finite(D))$ is dense in $\hyper(X)$. Denote $X':=\overline{\pi^{-1}(D)}$ and note that $\hyper(\pi)^{-1}(\finite(D))\subseteq \hyper(X')$. Since the latter is compact and hence a closed subset of $\hyper(X)$ we observe $\hyper(X)=\hyper(X')$, i.e.\ $X=X'$. 
    \end{proof}

\section{About products, composition and decomposition}
\label{sec:productsDecompositionComposition}

The proof of the following proposition is straightforward. We include the proofs for almost one-to-one and irreducible maps for the convenience of the reader. 

\begin{proposition}
    Let $(\pi_i\colon X_i\to Y_i)_{i\in I}$ be a family of continuous surjections and denote $\pi:=\prod_{i\in I} \pi_i$. 
    Then $\pi$ is almost one-to-one, if and only if all $\pi_i$ are almost one-to-one. A similar statement holds about irreducible, open and semi-open maps. 
\end{proposition}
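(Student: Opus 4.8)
The plan is to reduce everything to two structural identities for the product map $\pi=\prod_{i\in I}\pi_i$ on $\prod_{i\in I}X_i$, namely $\pi^{-1}\bigl(\prod_i V_i\bigr)=\prod_i \pi_i^{-1}(V_i)$ and $\pi\bigl(\prod_i U_i\bigr)=\prod_i \pi_i(U_i)$, together with the fact that the coordinate projections $p_j$ are continuous open surjections. Throughout I may assume every $X_i$ (equivalently every $Y_i$) is nonempty, since otherwise both products are empty and the claim is vacuous; Tychonoff's theorem and the stability of the Hausdorff property under products then guarantee that $\pi$ is again a continuous surjection between compact Hausdorff spaces.

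For the almost one-to-one case I would first observe that, by the preimage identity applied to singletons, a point $x=(x_i)_i$ satisfies $\pi^{-1}(\pi(x))=\prod_i \pi_i^{-1}(\pi_i(x_i))$, which (as all factors are nonempty) is a singleton exactly when each $\pi_i^{-1}(\pi_i(x_i))$ is. Hence the set of injectivity points of $\pi$ equals the product $\prod_i (X_i)_0$ of the injectivity sets. The proposition then follows from the purely topological facts that a product of dense subsets is dense in the product topology, and that the image of a dense set under the continuous open surjection $p_j$ (which sends $\prod_i (X_i)_0$ onto $(X_j)_0$) is again dense.

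For the irreducible, open, and semi-open cases I would use the characterizations recalled in the introduction and argue coordinatewise. For sufficiency it suffices to test the defining property on basic open boxes $\prod_i U_i$ with $U_i=X_i$ off a finite set $J$, invoking the hypothesis on $\pi_i$ only for $i\in J$: this yields, for each such $i$, a nonempty open $V_i\subseteq Y_i$ with $\pi_i^{-1}(V_i)\subseteq U_i$ (irreducible case), the open image $\pi_i(U_i)$ (open case), or a nonempty open subset of $\pi_i(U_i)$ (semi-open case); taking $Y_i$ on the remaining coordinates and applying the two product identities assembles the required open set in $\prod_i Y_i$. For necessity I would fix a coordinate $j$, lift a nonempty open $U_j\subseteq X_j$ to the cylinder $U_j\times\prod_{i\neq j}X_i$, apply the hypothesis to $\pi$, and project the resulting data back to the $j$-th factor, using that the $j$-coordinate constraint of a product containment forces the corresponding factor containment and that $p_j$ is open.

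The main obstacle is not any single deep step but rather making the sufficiency directions correct for an arbitrary, possibly infinite, index set: the argument hinges on the fact that basic open sets in the product constrain only finitely many coordinates, so that the hypotheses on the $\pi_i$ need only be invoked finitely often, and on the density of an infinite product of dense sets, which likewise relies on this finiteness. I would take care to state the nonemptiness reductions explicitly, since they are precisely what allow the factor-to-product and product-to-factor passages (for instance $p_j\bigl(\prod_i (X_i)_0\bigr)=(X_j)_0$) to hold.
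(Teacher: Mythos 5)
Your proposal is correct and takes essentially the same approach as the paper: the same identification of the injectivity points of $\pi$ with the product $\prod_{i\in I} X_{i,0}$ combined with the density of a product of dense sets, and the same cylinder/basic-box coordinatewise arguments for the remaining properties. The only cosmetic differences are that in the sufficiency direction for irreducibility you invoke the open-set characterization $\pi_i^{-1}(V_i)\subseteq U_i$ only on the finitely many constrained coordinates (taking $V_i=Y_i$ elsewhere), whereas the paper picks points $y_i$ with $\pi_i^{-1}(y_i)\subseteq U_i$ in every coordinate while explicitly noting the finiteness is not needed, and that you sketch the open and semi-open cases, which the paper leaves to the reader.
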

\begin{proof} 
    It is straight forward to show that the injectivity points of $\pi$ are given by $\times_{i\in I}X_{i,0}$. 
    The statement about almost one-to-one factor maps now follows, since $\times_{i\in I}X_{i,0}$ is dense in $\times_{i\in I}X_{i}$, if and only if for all $i$ we have that  $X_{i,0}$ is dense in $X_i$. 

    We next show that whenever $\pi$ is irreducible, then all the $\pi_i$ are irreducible.  
    Let $j\in I$ and consider $V\subseteq X_j$ open and non-empty. 
    Let $U_i:=X$ for $i\neq j$ and $U_j:=V$. Consider $U:=\times_i U_i$. Then $U$ is a non-empty and open subset of $\times_i X_i$. 
	By the irreducibility of $\pi$ there exists $y=(y_i)\in \times_i Y_i$ with $\times_i \pi_i^{-1}(y_i)=\pi^{-1}(y)\subseteq U$. In particular, we observe that $\pi_j^{-1}(y_j)\subseteq V$. This shows $\pi_j$ to be irreducible.

    For the converse assume that all $\pi_i$ are irreducible. 
    Let $U\subseteq \times_i X_i$ be open and non-empty. 
	There exist $(U_i)_{i\in I}$, such that $U_i$ is an open non-empty subset of $X_i$ for all $i\in I$ with $\times_i U_i\subseteq U$ (and all but finitely many satisfy $U_i=X_i$, we will not need the latter). 
	Now for $U_i$ choose $y_i\in Y_i$ with $\pi^{-1}(y_i)\subseteq U_i$. Denote $y:=(y_i)$ and note that $y\in \times_i Y_i$. 
	Clearly $\pi^{-1}(y)=\times_i \pi_i^{-1}(y_i)\subseteq \times_i U_i \subseteq U$. This shows that $\pi$ is irreducible. 
\end{proof}

In the following proposition we collect the composition and decomposition properties of the discussed notions. As above we include the details about almost one-to-one and irreducible factor maps for the convenience of the reader, but leave the proofs about open and semi-open factor maps to the reader.      

\begin{proposition}
    Let $\phi\colon X\to Y$ and $\psi\colon Y\to Z$ be continuous surjections between compact Hausdorff spaces and denote $\pi:=\psi\circ \phi$. 
\begin{itemize}
    \item[(i)] $\pi$ is irreducible, if and only if $\phi$ and $\psi$ are irreducile. 
    \item[(a)] If $\pi$ is almost one-to-one, then $\phi$ and $\psi$ are almost one-to-one. If $X$ is metrizable, then $\pi$ is almost one-to-one, if and only if $\phi$ and $\psi$ are almost one-to-one. 
    \item[(o)] If $\phi$ and $\psi$ are open, then $\pi$ is open. If $\pi$ is open, then $\psi$ is open. 
    \item[(s)] If $\phi$ and $\psi$ are semi-open, then $\pi$ is semi-open. If $\pi$ is semi-open, then $\psi$ is semi-open. 
\end{itemize}
\end{proposition}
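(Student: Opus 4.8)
The plan is to establish the two items (i) and (a) directly, relying on the open-set characterization of irreducibility recalled in the introduction, and to leave (o) and (s) to the reader as the paper announces.

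For (i) I would use the characterization that $\pi$ is irreducible if and only if every nonempty open $U\subseteq X$ admits $y$ with $\pi^{-1}(y)\subseteq U$, together with its open-set variant. For the implication $\Leftarrow$, given a nonempty open $U\subseteq X$, irreducibility of $\phi$ supplies a nonempty open $V\subseteq Y$ with $\phi^{-1}(V)\subseteq U$, and irreducibility of $\psi$ supplies $z\in Z$ with $\psi^{-1}(z)\subseteq V$; then $\pi^{-1}(z)=\phi^{-1}(\psi^{-1}(z))\subseteq\phi^{-1}(V)\subseteq U$, so $\pi$ is irreducible. For $\Rightarrow$, assume $\pi$ irreducible. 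To see that $\phi$ is irreducible, take a nonempty open $U\subseteq X$, choose $z\in Z$ with $\pi^{-1}(z)\subseteq U$, pick any $y\in\psi^{-1}(z)$ (nonempty by surjectivity of $\psi$), and note $\phi^{-1}(y)\subseteq\pi^{-1}(z)\subseteq U$. To see that $\psi$ is irreducible, take a nonempty open $V\subseteq Y$, observe that $\phi^{-1}(V)$ is nonempty and open, choose $z\in Z$ with $\pi^{-1}(z)\subseteq\phi^{-1}(V)$, and verify $\psi^{-1}(z)\subseteq V$: every $y\in\psi^{-1}(z)$ has nonempty fiber $\phi^{-1}(y)\subseteq\pi^{-1}(z)\subseteq\phi^{-1}(V)$, which forces $y\in V$.

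For the forward implication of (a), the key observation is a fiber computation: if $x$ is an injectivity point of $\pi$, then $x$ is an injectivity point of $\phi$ and $\phi(x)$ is an injectivity point of $\psi$. Indeed $\phi^{-1}(\phi(x))\subseteq\pi^{-1}(\pi(x))=\{x\}$, and if $\psi(y)=\psi(\phi(x))$ then choosing $x'$ with $\phi(x')=y$ gives $\pi(x')=\pi(x)$, hence $x'=x$ and $y=\phi(x)$. Writing $X_0$ for the injectivity points of $\pi$, density of $X_0$ thus makes the injectivity points of $\phi$ dense, so $\phi$ is almost one-to-one; and since a continuous surjection sends a dense set to a dense set, $\phi(X_0)$ is dense in $Y$ and consists of injectivity points of $\psi$, so $\psi$ is almost one-to-one. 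This direction requires no metrizability.

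For the converse in (a) I would route through irreducibility rather than argue with injectivity points. Since almost one-to-one maps are irreducible, $\phi$ and $\psi$ are irreducible, so part (i) gives that $\pi$ is irreducible; as $X$ is metrizable, irreducibility and the almost one-to-one property coincide for $\pi$ by \cite[Lemma 1.1]{AG01}, whence $\pi$ is almost one-to-one. The main obstacle, and the reason metrizability enters, is precisely that this coincidence fails for general compact Hausdorff spaces: a direct fiber argument would require the injectivity points of $\pi$ to be dense, and these contain the set $\{x : x\text{ is an injectivity point of }\phi\text{ and }\phi(x)\text{ is an injectivity point of }\psi\}$, which is the intersection of two dense sets (using that $\phi$ is semi-open to pull the dense injectivity set of $\psi$ back to a dense set in $X$); such an intersection need not be dense absent a Baire-category input such as metrizability.
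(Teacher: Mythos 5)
Your proposal is correct and follows essentially the same route as the paper: part (a) is proved identically (the same fiber computations for the forward direction, and the converse routed through (i) together with the coincidence of irreducibility and the almost one-to-one property for metrizable $X$ from \cite[Lemma 1.1]{AG01}), while part (i) differs only cosmetically, since you use the equivalent fiber/open-set characterization of irreducibility where the paper chases closed sets directly through the composition. Leaving (o) and (s) to the reader is also consistent with the paper, which does the same.
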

\begin{proof}
'(i)': Assume that $\pi$ is irreducible. 
Let $A\subseteq X$ be a closed subset with $\phi(A)=Y$. Then $\pi(A)=Z$ and the irreducibility of $\pi$ yields $A=X$. This shows $\phi$ to be irreducible. 
Let $A\subseteq Y$ be a closed subset with $\psi(A)=Z$. Then $\phi^{-1}(A)$ is a closed subset of $X$ with $\pi(\phi^{-1}(A))=Z$ and the irreducibility of $\pi$ yields $\phi^{-1}(A)=X$, from which we observe $A=Y$. 

Now assume that $\phi$ and $\psi$ are irreducible. Let $A\subseteq X$ be a closed subset with $\pi(A)=Z$. Now $\phi(A)$ is a closed subset of $Y$ wtih $\psi(\phi(A))=Z$ and the irreducibility of $\psi$ yields $\phi(A)=Y$. From the irreducibility of $\phi$ we observe $A=X$. 

'(a)': Assume that $\pi$ is almost one-to-one. 
Since any injectivity point of $\pi$ is an injectivity point of $\phi$ we observe that $\phi$ is also almost one-to-one. 
If $X_0$ is the set of injectivity points of $\pi$, then $\phi(X_0)$ is contained in the set of injectivity points of $\psi$. 
Since $X_0$ is dense and $\phi$ is a continuous surjection we observe that also $\phi(X_0)$ is dense in $Y$. This shows $\psi$ to be almost one-to-one. 
The case of a metrizable $X$ follows from (i).     
\end{proof}

The following example shows that the (semi-)openess of $\phi$ can not be deduced from the (semi-)openess of $\pi$.  

\begin{example}
    Consider $X:=[-1,1]$ and $Y:=[0,1]$ and the factor map $\phi\colon X\to Y$ that sends $[-1,0]$ to $0$ and acts as the identity on $[0,1]$. Clearly $\phi$ is not semi-open. Furthermore, we consider  $Z:=\{0\}$ and the (unique) maps $\pi\colon X\to Z$ and $\psi\colon Y\to Z$. Note that $\pi:=\psi\circ \phi$ and $\psi$ are open. 
\end{example}

\begin{remark}
    Recall that if we consider continuous surjections between compact metric spaces, then the composition of almost one-to-one factor maps is also almost one-to-one. It remains open, whether this is also true in the context of continuous surjections between compact Hausdorff spaces. 
\end{remark}

\bibliographystyle{alpha}
\bibliography{ref}

\end{document}